\newtheorem{Theorem}{Theorem}
\newtheorem{Lemma}{Lemma}
\newtheorem{Corollary}{Corollary}
\newtheorem{Definition}{Definition}
\newtheorem{Proposition}{Proposition}
\newtheorem{Conjecture}{Conjecture}
\renewcommand{\AA}{\mathbb{A}}
\newcommand{\FF}{\mathbb{F}}
\newcommand{\NN}{\mathbb{N}}
\newcommand{\QQ}{\mathbb{Q}}
\newcommand{\ZZ}{\mathbb{Z}}
\theoremstyle{definition}
\theoremstyle{definition}
\begin{document}

\title{Algebraic Degree Periodicity in Recurrence Sequences}


\author{Daqing Wan\textsuperscript{1}, Hang Yin\textsuperscript{2}}

\maketitle

\begin{center}
\textsuperscript{1}Department of Mathematics, University of California, Irvine, CA 92697\\dwan@math.uci.edu

\bigskip
\textsuperscript{2}Institute of Mathematics, Chinese Academy of Sciences, Beijing \\yy6137@sina.com

\end{center}

\vspace{1cm}

{\bf Abstract}. The degree sequence of the algebraic numbers in an algebraic linear recurrence sequence is 
shown to be virtually periodic. This is proved using the Skolem-Mahler-Lech theorem. It has applications 
to the degree sequence and the minimal polynomial sequence for exponential sums over finite fields. 
The degree periodicity also holds for some more complicated non-linear recurrence sequences. We give one example from 
the iterations of a polynomial map. This depending on the dynamic Mordell-Lang conjecture which 
has been proved in some cases.

\section{Introduction}

This note is motivated by the following number theoretic question 
on algebraic degrees of exponential sums over 
finite fields.

Let $p$ be a prime, $\zeta_p$ be a primitive $p$-th root of unity. 
For each positive integer $k$, let $\FF_{p^k}$ denote the finite field of $p^k$ elements. Let  $f(x_1,x_2,...,x_n)\in\FF_p[x_1,x_2,...,x_n]$ be a polynomial. For each positive integer $k$, we define the $k$-th exponential sum of $f$ to be
$$ S_k(f)=\sum_{(x_1,x_2,...,x_n)\in (\FF_{p^k})^n}
\zeta_p^{{\rm Tr}_k(f(x_1,x_2,...,x_n))}\in \ZZ[\zeta_p],\ k=1,2, \cdots,$$
where ${\rm Tr}_k:\FF_{p^k}\rightarrow\FF_p$ is the absolute trace map.

It is clear that $S_k(f)$ ($k=1,2,\cdots)$ is a sequence of algebraic 
integers in the $p$-th cyclotomic field. As an algebraic integer, its degree $\deg S_k(f)$ 
is a divisor of $p-1$. A natural problem is to understand and compute this 
degree sequence $\deg S_k(f)$ ($k=1,2,\cdots $). The problem is trivial 
if $p=2$. We can assume $p>2$. 
As we shall see, this degree problem is far from being well understood, even in the 
simplest classical case when $f(x)=x^d$ is a one variable monomial. 

A consequence of our main result is the following general structural 
theorem which shows that the degree sequence is virtually periodic. 

\begin{Theorem} The  sequence $\deg S_k(f)$ is virtually periodic in $k$. 
That is, there are positive integers $N$ and $r$ depending on $f$ and $p$ such that for all $k>N$, we have 
$\deg S_{k+r}(f) = \deg S_k(f)$. 
\end{Theorem}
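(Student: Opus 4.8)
The plan is to realize $S_k(f)$ as a linear recurrence sequence of algebraic numbers and then apply our general result; since this theorem is a specialization of that result, I will spell out the mechanism rather than merely cite it. First I would invoke the Dwork--Bombieri--Grothendieck rationality theorem: the $L$-function
$$L(f,T)=\exp\Big(\sum_{k\ge 1} S_k(f)\,\frac{T^k}{k}\Big)$$
is a rational function in $T$ with coefficients in $\QQ(\zeta_p)$. Writing it in reduced form as a ratio $\prod_j(1-\beta_j T)\big/\prod_i(1-\alpha_i T)$ and comparing logarithmic derivatives yields
$$S_k(f)=\sum_i \alpha_i^{\,k}-\sum_j \beta_j^{\,k}$$
for certain nonzero algebraic numbers $\alpha_i,\beta_j$. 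Thus $S_k(f)$ is an algebraic linear recurrence sequence all of whose terms lie in the fixed cyclotomic field $K=\QQ(\zeta_p)$.

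Next I would reformulate the degree through Galois theory. Set $G=\mathrm{Gal}(K/\QQ)\cong(\ZZ/p\ZZ)^\times$, a cyclic group of order $p-1$, and for each $k$ let $H_k=\{\sigma\in G:\sigma(S_k(f))=S_k(f)\}$ be the stabilizer of $S_k(f)$. Since $\QQ(S_k(f))=K^{H_k}$, we have $\deg S_k(f)=[G:H_k]=(p-1)/|H_k|$, so it suffices to prove that $k\mapsto H_k$ is eventually periodic. Because $G$ is finite, $H_k=\{\sigma:k\in Z_\sigma\}$ is determined by the finite family of membership predicates $k\in Z_\sigma$, where $Z_\sigma=\{k\ge 1:\sigma(S_k(f))=S_k(f)\}$; hence it is enough to show each $Z_\sigma$ is eventually periodic.

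To that end, fix $\sigma\in G$ and consider $u_k=\sigma(S_k(f))-S_k(f)$. Applying $\sigma$ to the power-sum expression above replaces each $\alpha_i,\beta_j$ by its conjugate $\sigma(\alpha_i),\sigma(\beta_j)$, so $\sigma(S_k(f))$ is again a linear recurrence; consequently $u_k$ is a linear recurrence sequence over $\overline{\QQ}$, and $Z_\sigma=\{k:u_k=0\}$ is exactly its zero set. By the Skolem--Mahler--Lech theorem $Z_\sigma$ is a finite union of arithmetic progressions together with a finite set, so there are $N_\sigma,r_\sigma$ such that for $k>N_\sigma$ the membership of $k$ in $Z_\sigma$ depends only on $k\bmod r_\sigma$. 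Taking $N=\max_{\sigma\in G} N_\sigma$ and $r=\mathrm{lcm}_{\sigma\in G}\, r_\sigma$, both finite since $G$ is finite, we conclude that for $k>N$ the set $H_k$ — and therefore $\deg S_k(f)=(p-1)/|H_k|$ — depends only on $k\bmod r$. In particular $\deg S_{k+r}(f)=\deg S_k(f)$ for all $k>N$.

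The step I expect to carry the real weight is the second one: passing from ``$S_k(f)$ is a recurrence'' to ``the degree is periodic.'' The degree is not itself a power sum, so Skolem--Mahler--Lech cannot be applied to it directly; the device of testing each Galois automorphism separately, legitimate here precisely because $G$ is finite and abelian, is what converts a question about orbit sizes into finitely many zero-set questions to which the theorem does apply. Everything else is bookkeeping: one need only note that if some $u_k$ vanishes identically then $Z_\sigma$ is all of $\NN$ and is still periodic, and that the several finite exceptional sets produced by Skolem--Mahler--Lech can be absorbed into the single threshold $N$. This is exactly the content of our general theorem on algebraic linear recurrence sequences, of which the present statement is the exponential-sum specialization.
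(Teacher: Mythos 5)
Your proposal is correct and follows essentially the same route as the paper: it deduces that $S_k(f)$ is a linear recurrence via the rationality of $L(f,T)$, then applies the Galois-stabilizer argument of Theorem 3, testing each $\sigma\in G$ separately and invoking Skolem--Mahler--Lech on the zero set of $\sigma(S_k(f))-S_k(f)$. The only cosmetic difference is your choice of $r=\mathrm{lcm}_\sigma r_\sigma$ where the paper takes the product of the $r_\sigma$; both work.
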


Theoretically, this theorem suggests that the computation of the infinite degree sequence could be done in a finite amount of time. In reality, this 
is far from being so. The problem is that the constant $N$ is not effective, although $r$ (a period) can be made effective. An interesting open problem is to compute 
the degree sequence in effective finite amount of time. 
Equivalently, one would like to have an effective upper bound for the 
total degree of the rational function
$$\sum_{k=1}^{\infty} (\deg S_k(f)) T^k \in \QQ(T).$$
A similar non-effective problem occurs in explicit lower bound for the complex absolute value 
of the sequence $S_k(f)$ as $k$ varies, see Bombieri-Katz \cite{BK10}.

To get some feeling on this degree problem, let us examine the simplest  classical one variable example when $f(x)=x^d$. 
Historically, $S_k(x^d)$ was sometimes called a Gauss sum or a Gauss period in the literature. 
It has been studied extensively, see the survey in Berndt and Evans \cite{BE81}. 
A trivial reduction shows that one can assume that $(d,p)=1$. 
If $p \equiv 1 \mod{d}$, it is known \cite{W20} that 
$$\deg S_k(x^d) = \frac{d}{(d,k)}, \ k=1,2, \cdots .$$
This formula is due to Gauss when $k=1$ and to Myerson \cite{My81} when $(d,k)=1$. 
It follows that the degree sequence $\deg S_k(x^d)$ is periodic (not just virtually periodic) and 
completely determined when $p \equiv 1 \mod{d}$. Another known  
case \cite{W20} is when $(p-1, d)=1$, in which case, $\deg S_k(x^d)=1$ 
for all $k$. Note that $(p^k-1, d)\not =1$ in general, even if $(p-1,d)=1$. 
These results imply that the degree sequence $\deg S_k(x^d)$
($k=1,2,\cdots$) is completely determined for all primes $p$  when $d$ is a prime. If $d$ is not a prime, effectively determining the degree sequence $\deg S_k(x^d)$ for all primes $p$ 
is not known yet. 

Another classical example is the Kloosterman sum. Fix an integer $a$ not divisible by $p$. For 
positive integer $n$, the sequence of $n$-dimensional Kloosterman sums is defined by 
$${\rm Kl}_k(n,a) = \sum_{(x_1,x_2,...,x_n)\in(\FF^*_{p^k})^n}
\zeta_p^{{\rm Tr}_k(x_1+x_2+\cdots +x_n + \frac{a}{x_1\cdots x_n})} \in \ZZ[\zeta_p], \ k=1,2, \cdots .$$
The degree sequence $\deg Kl_k(n,a)$ ($k=1,2,\cdots)$ is again 
far from being well understood. A straightforward Galois theoretic argument shows that 
$$\deg {\rm Kl}_k(n,a) ~ | ~ \frac{p-1}{(n+1, p-1)}, \ k=1,2,\cdots.$$
These two numbers are not expected to be equal in general. 
One simple result in \cite{W95} says that if 
$k$ is not divisible by $p$, then 
$$\deg {\rm Kl}_k(n,a)=\frac{p-1}{(n+1, p-1)}.$$
The special case $n=1$ and $k=1$ goes back to Sali\'e \cite{Sa32}. 
If $p>(2(n+1)^{2k}+1)^2$, this formula also follows from Fisher's 
result \cite{Fi92} on distinctness of Kloosterman sums, proved using $\ell$-adic cohomology. 
Effectively 
computing the degree sequence $\deg {\rm Kl}_k(n,a)$ ($k=1,2,\cdots$) 
is currently unknown in general when $k$ is divisible by $p$.

Our main result of this note is to prove that the degree 
sequence of any linear recurrence sequence in any finite extension of a field of 
characteristic zero is virtually periodic, see Theorem 3 for a precise statement. The proof combines  
Galois theory together with the well known Skolem-Mahler-Lech theorem on zeros of linear recurrence sequences. 
The non-effective part of our result comes from the non-effective 
part of the Skolem-Mahler-Lech theorem. Theorem 1 on 
exponential sums then follows from this and the fact that 
the sequence $S_k(f)$ ($k=1,2,\cdots$) is a linear recurrence sequence of algebraic integers. 
The latter fact is a consequence of the well known Dwork-Bombieri-Grothendick rationality theorem which says 
that the L-function
$$L(f, T)= \exp (\sum_{k=1}^{\infty}\frac{S_k(f)}{k}T^k)$$
is a rational function. 
More generally, the virtual periodicity result stated in Theorem 1 holds for the degree sequence arising from any motivic L-function over finite fields. 
It would be interesting to make the computation of the degree sequence  
effective in various important number theoretic cases, 
in addition to the above exponential sum examples. 

The Skolem-Mahler-Lech theorem is equivalent to a similar theorem on the iteration sequence of a linear map on a vector space. From this dynamic point of view,  
a remarkable generalization is the dynamic Mordell-Lang conjecture for the iteration sequence of a polynomial map, as proposed 
by Ghioca and Tucker \cite{GT09}. This conjecture is still open in general but has been 
proved in some cases. As an illustration, we show that the degree sequence arising from 
the iteration of any one variable polynomial with algebraic coefficients evaluated at an algebraic point is also virtually periodic. This application 
depends on Xie's work \cite{Xi17} on the two variable case of the dynamic Mordell-Lang conjecture.

\section{Degree periodicity and linear recurrence sequences}

We first recall some basic definitions about linear recurrence sequences. 

\begin{Definition}Let $\{a_n\}_{n\geqslant 0}$ be a sequence in a field $K$.
\begin{enumerate}
\item We say $\{a_n\}_{n\geqslant 0}$ is an LRS (linear recurrence sequence) if there exist some $m\in\NN$ and $c_i\in K$ $(1\leqslant i\leqslant m)$ such that 
\[a_{n}=c_1a_{n-1}+\cdots +c_{m-1}a_{n-m+1}+c_ma_{n-m}\]
holds for all $n\geqslant m$.
\item The sequence $\{a_n\}_{n\geqslant 0}$ is called periodic if there exists some integer $r>0$ such that $a_n=a_{n+r}$ holds for all $n\geq 0$.
The sequence $\{a_n\}_{n\geqslant 0}$ is called virtually periodic if there exist some $N,r\in\NN$ such that for all $n>N,a_{n+r}=a_n$.  In both cases, 
such an integer $r$ is called a period of this sequence. 
\item We say $\{a_n\}_{n\geqslant 0}$ is a rational sequence if its 
generating function $f(x)=\sum_na_nx^n$ is a rational function.
\end{enumerate}
\end{Definition}

\begin{Proposition} The following properties are well known and easy to check. 
\begin{enumerate}
\item A virtually periodic sequence is clearly a rational sequence.
\item A sequence $\{a_n\}_{n\geqslant 0}$ is a rational sequence if and only if for some $m\in\ZZ_{\geq 0}$, the new sequence $\{a_n\}_{n\geqslant m}$ is an LRS. Furthermore, an LRS is exactly a rational sequence whose generating function vanishes at $\infty$ i.e. $\sum_na_nx^n=\frac{f(x)}{g(x)}$ with $\deg(f)<\deg(g)$.

\item The sub-sequence $\{ a_{i+nr}\}$ ($n\geq 0$) of any rational sequence $\{a_n\}$ is again a rational 
sequence for all integers $i, r\in \ZZ_{\geq 0}$. 

\item A sequence $\{a_n\}_{n\geqslant 0}$ in an algebraic closure $\bar{K}$ of $K$ is a rational sequence if and only if there are 
finite number of polynomials 
$h_i(x)$ and elements $\beta_i$ over $\bar{K}$ ($1\leq i\leq m$) such that for all 
sufficiently large $n$, we have 
$$a_n = \sum_{i=1}^m h_i(n)\beta_i^n.$$

\end{enumerate}
\end{Proposition}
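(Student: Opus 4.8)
The plan is to treat all four items through the single lens of formal generating functions, proving them in the order (2), (4), (1), (3): item (2) is the basic dictionary between power series and recurrences, item (4) is the structural core, and (1) and (3) then fall out quickly. Throughout, the one point that genuinely demands care is the bookkeeping of the finitely many exceptional initial terms, which is exactly what forces the ``for all sufficiently large $n$'' qualifier in (4) rather than an identity valid for every $n$.

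First I would prove (2). After normalizing the denominator so that $g(0)=1$, write $g(x)=1-c_1x-\cdots-c_mx^m$ and suppose $\sum_n a_n x^n=f(x)/g(x)$. Multiplying through by $g$ and comparing the coefficient of $x^n$ on both sides shows that the identity $a_n=c_1a_{n-1}+\cdots+c_ma_{n-m}$ holds precisely for those $n$ exceeding $\deg f$; conversely, an LRS relation is exactly such a coefficient identity. This yields both assertions at once: rationality is equivalent to the shifted tail $\{a_n\}_{n\ge m'}$ being an LRS for suitable $m'$, and the sharper claim that $\{a_n\}_{n\ge 0}$ satisfies the recurrence from the start corresponds to $\deg f<\deg g$, i.e. to the generating function vanishing at infinity.

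Next I would establish the closed form (4), where the real work lies. Since we are over the algebraically closed field $\bar K$, the denominator factors completely as $g(x)=\prod_{i}(1-\beta_i x)^{d_i}$. A partial fraction decomposition then writes the generating function as a polynomial $P(x)$ plus a sum of terms $A_{ij}/(1-\beta_i x)^{j}$. Expanding each such term via $(1-\beta x)^{-j}=\sum_n \binom{n+j-1}{j-1}\beta^n x^n$ and noting that $\binom{n+j-1}{j-1}$ is a polynomial in $n$ of degree $j-1$, I read off that for every $n$ larger than $\deg P$ the coefficient $a_n$ has the asserted shape $\sum_i h_i(n)\beta_i^n$. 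The converse direction is immediate: each individual term $n^k\beta^n$ has the rational generating function $(x\frac{d}{dx})^k(1-\beta x)^{-1}$, and a finite sum of rational functions is rational.

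Finally, (1) and (3) follow. For (1), a virtually periodic sequence differs from a genuinely periodic one in only finitely many terms, and the periodic tail has generating function a finite combination of the rational functions $x^{s}/(1-x^r)$; alternatively it is the special case of (4) in which the $\beta_i$ are $r$-th roots of unity and the $h_i$ are constants. For (3), I would apply (4) to write $a_n=\sum_i h_i(n)\beta_i^n$ for large $n$, so that $a_{i+nr}=\sum_i\bigl(\beta_i^{\,i}h_i(i+nr)\bigr)(\beta_i^{\,r})^n$ is again of the form $\sum_i\tilde h_i(n)\gamma_i^n$ with $\gamma_i=\beta_i^{\,r}$; the converse half of (4) then returns rationality of the subsequence. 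I expect the main (if modest) obstacle to be keeping the polynomial part $P(x)$ of (2) and (4) correctly quarantined among the initial terms, so that it never corrupts the asymptotic formula.
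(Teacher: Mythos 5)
The paper gives no proof of this Proposition at all---it is dismissed as ``well known and easy to check''---so the only comparison available is against the standard argument, which is precisely what you supply. Your chain $(2)\to(4)\to(1),(3)$, via normalizing $g(0)=1$, comparing coefficients after multiplying by $g$, and doing partial fractions over $\bar{K}$ with the expansion $(1-\beta x)^{-j}=\sum_n\binom{n+j-1}{j-1}\beta^nx^n$, is correct, and you rightly isolate the polynomial part as the source of the ``sufficiently large $n$'' qualifier. Two small points worth tidying: in (3) you use the letter $i$ simultaneously as the shift and as the summation index, and if one wants the subsequence in (3) to be rational over the original field $K$ rather than merely over $\bar{K}$, a one-line descent is needed (the minimal denominator of a power series with coefficients in $K$ is itself defined over $K$); alternatively (3) can be gotten without (4) by noting that the tail of $\{a_{i+nr}\}$ satisfies the linear recurrence whose characteristic roots are the $r$-th powers of those of $\{a_n\}$.
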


The last property gives the following lemma which will be used in our proof. 

\begin{Lemma} For each integer $1\leq j \leq \ell$, let $\{a_{jn}\}$ be a rational sequence in $K$. Let $g(x_1,\cdots, x_{\ell})\in K[x_1,\cdots, x_{\ell}]$. Then, the new sequence 
$g(a_{1n}, \cdots, a_{\ell n})$ is a rational sequence in $K$.
\end{Lemma}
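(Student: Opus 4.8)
The plan is to deduce the lemma from two closure properties of rational sequences together with the exponential-polynomial characterization in part (4) of the Proposition. Since $g$ is a polynomial, the sequence $g(a_{1n},\cdots,a_{\ell n})$ is a finite $K$-linear combination of sequences of the form $a_{1n}^{e_1}\cdots a_{\ell n}^{e_\ell}$, and each such sequence is a finite termwise product of the given rational sequences $\{a_{jn}\}$. Constant sequences and scalar multiples are trivially rational, and termwise sums are rational because generating functions add and a sum of rational functions is rational. Thus it suffices to prove that the termwise product of two rational sequences in $K$ is again a rational sequence in $K$.

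For the product I would pass to the algebraic closure $\bar K$, over which a rational sequence in $K$ stays rational (the same generating function works). By part (4) of the Proposition, for all sufficiently large $n$ the two factors can be written as
$$a_n=\sum_{i} h_i(n)\beta_i^n, \qquad a'_n=\sum_{j} k_j(n)\gamma_j^n,$$
with $h_i,k_j$ polynomials and $\beta_i,\gamma_j\in\bar K$. Multiplying these expansions and using $\beta_i^n\gamma_j^n=(\beta_i\gamma_j)^n$ gives, for all large $n$,
$$a_na'_n=\sum_{i,j}\big(h_i(n)k_j(n)\big)(\beta_i\gamma_j)^n,$$
which is again of the exponential-polynomial shape of part (4), the coefficients $h_i(n)k_j(n)$ being polynomials in $n$. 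Hence the converse direction of part (4) shows that the termwise product $\{a_na'_n\}$ is a rational sequence in $\bar K$.

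It remains to descend from $\bar K$ back to $K$, and this is the only point requiring real care. The product sequence takes all its values in $K$, and by part (2) of the Proposition its rationality over $\bar K$ means that, from some index on, it satisfies a linear recurrence $b_n=\sum_{i=1}^{M}c_i b_{n-i}$ with $c_i\in\bar K$. Regarding the $c_i$ as unknowns, this is a system of $K$-linear equations, since all its coefficients and right-hand sides are sequence values lying in $K$. Solvability of such a system is governed by ranks of matrices with entries in $K$ and is therefore unchanged under the field extension $K\subseteq\bar K$; consequently a recurrence with coefficients in $\bar K$ forces one with coefficients in $K$. By part (2) this makes $\{a_na'_n\}$ a rational sequence in $K$, which completes the reduction and hence proves the lemma. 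The main obstacle is precisely this field-of-definition descent: the multiplicativity is most naturally proved over $\bar K$, where part (4) is available, and one must verify that rationality of a $K$-valued sequence descends from the algebraic closure back to $K$.
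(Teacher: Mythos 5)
Your proof is correct and follows the same route the paper intends: the Lemma is stated there as an immediate consequence of part (4) of the Proposition, i.e.\ one expands $g$ into monomials and observes that termwise sums and products of exponential-polynomial sequences are again of exponential-polynomial form, hence rational. Your additional descent from $\bar{K}$ back to $K$ via the rank argument for the linear system defining the recurrence coefficients is a valid extra precaution on a point the paper leaves implicit.
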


We first remind ourselves of the Skolem-Mahler-Lech theorem.

\begin{Theorem}[Skolem-Mahler-Lech]Let $K$ be a field of characteristic 0. Let $\{a_n\}_{n\geqslant 0}$ be a rational sequence with coefficients in $K$, then the set $\{n\in\NN|a_n=0\}$ is virtually periodic. 
\end{Theorem}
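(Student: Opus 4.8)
The plan is to prove this by the classical $\fp$-adic interpolation argument of Skolem, Mahler, and Lech. Since altering finitely many terms cannot affect whether the zero set is virtually periodic, I would first invoke Proposition 1(4) to assume that $a_n=\sum_{i=1}^m h_i(n)\beta_i^n$ holds for all $n$, discarding the finitely many exceptional small indices. All the relevant data --- the roots $\beta_i$ and the coefficients of the $h_i$ --- lie in a finitely generated extension of $\QQ$, so the first reduction is to descend to the case where $K$ is a number field; this can be arranged by a specialization argument, choosing a $\QQ$-algebra homomorphism from the finitely generated domain these elements generate into $\overline{\QQ}$ that preserves the finitely many relevant nonvanishing conditions.

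Next I would select a good prime. Because there are only finitely many $\beta_i$, all but finitely many primes $\fp$ of the ring of integers $\cO$ of this number field make every $\beta_i$ a $\fp$-adic unit; I fix such a $\fp$ lying over an odd rational prime. Since the residue field is finite, there is a positive integer $r$ with $\beta_i^r\equiv 1\pmod{\fp}$ for every $i$, and after possibly enlarging $r$ by a factor of the residue characteristic we may assume each $\beta_i^r$ lies close enough to $1$ that the $\fp$-adic logarithm converges on it. The integer $r$ will be the eventual period.

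The heart of the argument is analytic interpolation along arithmetic progressions. Fixing a residue $t\in\{0,1,\dots,r-1\}$ and writing $\beta_i^{t+mr}=\beta_i^t(\beta_i^r)^m=\beta_i^t\exp\!\big(m\log(\beta_i^r)\big)$, each term becomes a convergent $\fp$-adic power series in the variable $m$; combined with the polynomial factors $h_i(t+mr)$, the subsequence $m\mapsto a_{t+mr}$ is the restriction to $\NN$ of a single $\fp$-adic analytic function $F_t$ on the closed unit ball. By Strassmann's theorem, $F_t$ either vanishes identically --- in which case the whole progression $t+r\NN$ lies in the zero set --- or has only finitely many zeros. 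Running over the $r$ residues $t$ exhibits $\{n\in\NN\mid a_n=0\}$ as a finite set together with a finite union of full arithmetic progressions of common difference $r$, which is exactly a virtually periodic set with period $r$.

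The main obstacle is the descent in the first step: when $K$ has positive transcendence degree over $\QQ$ one cannot work in a number field directly, and one must check that a specialization can be chosen which neither collapses the recurrence nor creates spurious zeros, so that virtual periodicity of the specialized zero set lifts back to the original sequence. A secondary subtlety is arranging the $\fp$-adic convergence of $\log$ and $\exp$ simultaneously for all $\beta_i$, which forces the choice of odd residue characteristic and the enlargement of $r$. Finally, I would note that Strassmann's theorem bounds the number of sporadic zeros but gives no control over how large they can be; this is the source of the non-effectivity of the threshold $N$ flagged in the introduction, whereas the period $r$ is produced explicitly by the construction.
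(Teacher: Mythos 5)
The paper does not actually prove this statement: it quotes the Skolem--Mahler--Lech theorem as a known classical result and uses it as a black box, so there is no internal proof to compare against. Your outline is the classical $\fp$-adic proof, and its analytic core --- splitting into residue classes modulo $r$, writing $\beta_i^{t+mr}=\beta_i^t\exp(m\log\beta_i^r)$, interpolating each subsequence by a single $\fp$-adic analytic function, and applying Strassmann's theorem --- is correct and is exactly how Skolem, Mahler and Lech argued. Your closing remarks about where the non-effectivity of $N$ enters (Strassmann bounds the \emph{number} of sporadic zeros, not their size) and about the effectivity of the period $r$ also match the discussion in the paper's introduction.

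The genuine gap is in your first reduction. A specialization, i.e.\ a $\QQ$-algebra homomorphism $\phi$ from the finitely generated domain into $\overline{\QQ}$, is not injective when $K$ has positive transcendence degree over $\QQ$, and it can only be arranged to preserve \emph{finitely many} nonvanishing conditions; what you actually need to preserve is the infinite family of conditions $a_n\neq 0$ for every $n$ outside the zero set. Under specialization the zero set can only grow, $\{n: a_n=0\}\subseteq\{n:\phi(a_n)=0\}$, and a subset of a virtually periodic set need not be virtually periodic, so virtual periodicity of the specialized sequence does not lift back to the original one. The difficulty you flag as ``the main obstacle'' is therefore not one this route can overcome: over the countable field $\overline{\QQ}$ a countable union of proper closed subsets can exhaust all algebraic points, so no generic choice of $\phi$ avoids creating spurious zeros for all $n$ simultaneously. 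The correct tool, due to Lech and in clean form to Cassels, is an \emph{embedding} rather than a specialization: every finitely generated field of characteristic zero embeds into $\QQ_p$ for infinitely many primes $p$, and $p$ may be chosen so that the finitely many elements $\beta_i$ become $p$-adic units. An embedding preserves zero and nonzero exactly, so the zero set is literally unchanged, and the remainder of your argument then goes through verbatim inside $\QQ_p$. With the specialization step replaced by this embedding theorem, your proof is complete.
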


We now present our result. 

\begin{Theorem}
Let $\{a_n\}_{n\geqslant 0}$ be a rational sequence with coefficients in a finite extension $L$ of a field $K$ of characteristic zero. We have the following. 
\begin{itemize}
    
\item{(1)}. The sequence $\{\deg(a_n)\}_{n\geqslant 0}$ is virtually periodic, where $\deg(a_n)=[K(a_n):K]$.

\item{(2)}. Let $P_n(T)$ denote the minimal polynomial of $a_n$ over $K$. 
The sequence $\{P_n(T)\}_{n\geqslant 0}$ is a rational sequence, that is, its generating function $\sum_n P_n(T)x^n$ is a rational function in $x$ with poles algebraic over $K$. 
\end{itemize}
\end{Theorem}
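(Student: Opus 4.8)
The plan is to combine the Galois action on conjugates with the Skolem--Mahler--Lech theorem. Since $L/K$ is finite and separable (characteristic zero), I would first pass to the Galois closure: let $M$ be the Galois closure of $L/K$, set $G=\mathrm{Gal}(M/K)$, a finite group, and regard every $a_n$ as an element of $M$. For each $\sigma\in G$, extend $\sigma$ to an automorphism of $\bar{K}$ fixing $K$ and note that the conjugate sequence $\{\sigma(a_n)\}$ is again rational: the closed form of Proposition 1(4), $a_n=\sum_i h_i(n)\beta_i^n$ for large $n$, gives $\sigma(a_n)=\sum_i h_i^{\sigma}(n)(\sigma\beta_i)^n$, which has the same shape. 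Hence $\{\sigma(a_n)-a_n\}$ is a rational sequence over $M$, and by Theorem 2 its zero set $Z_\sigma=\{n:\sigma(a_n)=a_n\}$ is virtually periodic.

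The key observation for part (1) is that the stabilizer $H_n:=\mathrm{Stab}_G(a_n)=\{\sigma\in G: n\in Z_\sigma\}$, so that by Galois theory $\deg(a_n)=[K(a_n):K]=[G:H_n]$. Taking $r$ to be a common period and $N$ a common threshold for the finitely many virtually periodic sets $Z_\sigma$, the membership pattern $\{\sigma:n\in Z_\sigma\}$ depends only on $n\bmod r$ once $n>N$. Thus $H_n$, and therefore $\deg(a_n)=[G:H_n]$, is constant on each residue class modulo $r$ beyond $N$, which is exactly the asserted virtual periodicity.

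For part (2) I would work one residue class at a time. Fix $i$ and restrict to $n\equiv i\pmod r$ with $n>N$, where $H_n$ equals a fixed subgroup $H$ and $d=[G:H]$. Choosing coset representatives $\sigma_1,\dots,\sigma_d$ of $G/H$, the conjugates $\sigma_1(a_n),\dots,\sigma_d(a_n)$ are precisely the distinct roots of the minimal polynomial, so $P_n(T)=\prod_{t=1}^{d}\bigl(T-\sigma_t(a_n)\bigr)$. Each coefficient of $P_n(T)$ is an elementary symmetric function (with integer coefficients) of the conjugate sequences, which are rational in $n$ on this class by Proposition 1(3); Lemma 1 then makes every $T$-coefficient of $P_n(T)$ a rational sequence in $n$. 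Finally I would reassemble the classes: a sequence that is rational on each class modulo $r$ is rational, since $\sum_n b_n x^n=\sum_i x^i G_i(x^r)$ with each $G_i$ rational. Applying this to every $T$-coefficient shows $\sum_n P_n(T)x^n$ is rational in $x$, with poles among the values $(\sigma\beta_i)^{-1}$, all algebraic over $K$.

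The main obstacle is part (2): the Galois orbit of $a_n$, and hence the very shape of the product defining $P_n(T)$, changes with $n$, so $P_n(T)$ is \emph{not} a single fixed symmetric function of the conjugate sequences. The device that overcomes this is exactly the Skolem--Mahler--Lech input, which forces the stabilizer $H_n$ to be eventually periodic; restricting to residue classes freezes the orbit structure and reduces everything to the stable case where Lemma 1 applies. Secondary care is needed to check that applying $\sigma$ preserves rationality (handled by the closed form above) and that the interleaving step back across residue classes preserves rationality of the generating function.
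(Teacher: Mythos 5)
Your proposal is correct and follows essentially the same route as the paper: apply each $\sigma$ in the Galois group to get the rational sequence $\{\sigma(a_n)-a_n\}$, invoke Skolem--Mahler--Lech to make the stabilizer $H_n$ eventually periodic, identify $\deg(a_n)=[G:H_n]$, and then on each frozen residue class write $P_n(T)$ as a product over coset representatives whose coefficients are elementary symmetric functions of linear recurrence sequences. The only (harmless) additions are your explicit justification that $\{\sigma(a_n)\}$ is rational via the closed form and the explicit interleaving step at the end, both of which the paper leaves implicit.
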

\begin{proof}Without loss of generality, we may enlarge L to assume $L/K$ is finite Galois. Let $\sigma\in G={\rm Gal}(L/K)$. Since $\{a_n\}$ is a rational sequence, applying $\sigma$ to it we get another rational sequence $\{\sigma(a_n)\}$. Now the difference $\{\sigma(a_n)-a_n\}$ is also a rational sequence. By the Skolem-Mahler-Lech theorem, we deduce that  $\{n\in\NN|\sigma(a_n)-a_n=0\}$ is virtually periodic. In another word, the indices of those $a_n$ fixed by $\sigma$ is virtually periodic.

For each $\sigma \in G$, let $N_\sigma$ and $r_\sigma$ be positive integers 
such that for all $n>N_\sigma$, $a_n$ is fixed by $\sigma$ if and only if  $a_{n+r_\sigma}$ is fixed by $\sigma$. Let $N=\max_{\sigma\in G}(N_\sigma)$, $r=\prod_{\sigma\in G}r_\sigma$. We see that $N,r$ work for all $\sigma\in G$ uniformly. Now for all $n>N$ and all $\sigma\in G$, $\sigma$ fixes $a_n$ if and only if it fixes $a_{n+r}$, which implies the fixed subgroup $H_n$ of $a_n$ satisfies $H_n=H_{n+r}$. By Galois theory, the degree of $a_n$ is just $[G:H_n]$, hence for $n>N, \deg(a_n)=\deg(a_{n+r})$. This shows that $\{\deg(a_n\}$ is virtually periodic.

Now, for each integer $0\leq i\leq r-1$, we have 
$H_{N+i}=H_{N+i+kr}$ for all integers $k\geq 0$. Let $\sigma_{i1}, \cdots, \sigma_{id_i}$ be a set of representatives for the coset $G/H_{N+i}=G/H_{N+i+kr}$. 
The minimal polynomial of $a_{N+i+kr}$ over $K$ is 
$$P_{N+i+kr}(T) =\prod_{\sigma \in G/H_{N+i+kr}}(T -\sigma (a_{N+i+kr})) = \prod_{j=1}^{d_i}(T -\sigma_{ij}(a_{N+i+kr})).$$
For $\sigma\in G$, the sequence $\sigma (a_{N+i+kr})$ ($k\geq 0$) is an LRS.  
Each coefficient of the above polynomial is an elementary symmetric polynomial of 
finitely many such linear recurrence sequences. It follows that the generating function 
of the subsequence $\{P_{N+i+kr}(T)\}$ ($k\geq 0$) is a rational function in 
$x$ for each $i$. Hence, the generating function of the total 
sequence $\{P_{n}(T)\}$ ($n\geq 0$) 
is also a rational function in $x$. Furthermore, the poles of this 
rational function are clearly algebraic over $K$.  
The proof is complete.

\end{proof}

The following corollary was first suggested by Shaoshi Chen,  based on his computer calculations. 

\begin{Corollary}
Let $\alpha$ be an element in a finite extension $L$ of a field $K$ of characteristic zero. For integer $n\geq 0$, let $P_n(T)$ be the minimal 
polynomial of $\alpha^n$ over $K$. Then, the sequence $P_n(T)$ ($n\geq 0$) is 
a rational sequence, that is, its generating function $\sum_n P_n(T)x^n$ is a rational function in $x$ with poles in the Galois closure of $K(\alpha)$ over $K$. 
\end{Corollary}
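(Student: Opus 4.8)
The plan is to deduce this directly from Theorem 3 applied to the geometric sequence $a_n=\alpha^n$, and then to sharpen the conclusion about where the poles sit by exploiting the explicit structure available in the geometric case. First, since $\sum_n \alpha^n x^n = \tfrac{1}{1-\alpha x}$, the sequence $\{\alpha^n\}$ is visibly a rational sequence with coefficients in $L$, so part (2) of Theorem 3 already gives that $\sum_n P_n(T)x^n$ is a rational function in $x$ whose poles are algebraic over $K$. The only additional content of the corollary over Theorem 3 is the assertion that these poles lie in the Galois closure $M$ of $K(\alpha)$ over $K$.

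To locate the poles I would work inside $M$, with $G=\mathrm{Gal}(M/K)$. Every conjugate of every $\alpha^n$ is a power $\sigma(\alpha)^n$ of some conjugate $\sigma(\alpha)$ ($\sigma\in G$) and hence lies in $M$, so $M$ is large enough for $P_n(T)$ to split. The key simplification over the general Skolem–Mahler–Lech input is that here the stabilizers are explicit: $\sigma$ fixes $\alpha^n$ precisely when $(\sigma(\alpha)/\alpha)^n=1$, i.e. when $\sigma(\alpha)/\alpha$ is an $n$-th root of unity. I would therefore let $r$ be the least common multiple of the orders of those ratios $\sigma(\alpha)/\alpha$ that happen to be roots of unity (a ratio that is not a root of unity contributes only at $n=0$). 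Then the stabilizer $H_n$ of $\alpha^n$ depends only on the residue of $n$ modulo $r$ for $n\geq 1$, and, crucially, each of these root-of-unity ratios lies in $M$, so $M$ contains a primitive $r$-th root of unity $\zeta_r$.

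Next I would run the residue-class mechanism from the proof of Theorem 3 with this explicit $r$. Splitting into classes $n=i+kr$, each coefficient of $P_{i+kr}(T)$ is an elementary symmetric polynomial in the values $\sigma(\alpha)^{i+kr}=\sigma(\alpha)^i\bigl(\sigma(\alpha)^r\bigr)^k$, where $\sigma$ runs over a fixed set of coset representatives of $G/H_i$. By Lemma 1 such a symmetric polynomial is, as a function of $k$, a finite sum of geometric sequences whose ratios are powers $\nu^r$ with $\nu=\prod\sigma(\alpha)$ a product of conjugates of $\alpha$, hence $\nu\in M$. The generating function of the subsequence (in the variable tracking $k$) thus has denominator a product of factors $1-\nu^r y$; substituting $y=x^r$ and factoring $1-\nu^r x^r=\prod_{t=0}^{r-1}(1-\zeta_r^{t}\nu x)$ exhibits every pole in the form $\zeta_r^{-t}/\nu$. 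Since $\zeta_r\in M$ and $\nu\in M$, all poles lie in $M$, and recombining the finitely many residue classes (Proposition 1(3)) preserves this.

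The step I expect to be the main obstacle is exactly this control of the poles. A naive application of the Theorem 3 argument produces $r$-th roots of the poles of the subsequence generating functions, and such $r$-th roots a priori involve $r$-th roots of unity that need not lie in $M$. The resolution is the specific choice of $r$ as the least common multiple of the orders of the root-of-unity ratios $\sigma(\alpha)/\alpha$: this simultaneously makes the stabilizers genuinely $r$-periodic and forces $\zeta_r\in M$, so the apparently spurious roots of unity are already present in the Galois closure. Everything else is a routine specialization of Theorem 3.
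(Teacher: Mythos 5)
Your proposal is correct, and its skeleton --- specialize Theorem 3 and its proof to the geometric sequence $a_n=\alpha^n$, observing that the fixed sets $\{n \mid \sigma(\alpha^n)=\alpha^n\}$ are genuine arithmetic progressions so the Skolem--Mahler--Lech input is unnecessary --- is exactly the paper's route; the paper offers no separate proof of the Corollary beyond that observation. Where you go beyond the paper is at the one point where the Corollary claims more than Theorem 3 delivers: Theorem 3 only places the poles among elements algebraic over $K$, whereas the Corollary puts them in the Galois closure $M$ of $K(\alpha)$, and the paper gives no argument for this refinement. Your argument for it is sound and genuinely needed. The recombination $\sum_i x^i B_i(x^r)$ turns poles $1/\nu^r$ of the residue-class pieces into $r$-th roots $\zeta_r^t/\nu$, so one must know $\zeta_r\in M$; your choice of $r$ as the least common multiple of the orders of the root-of-unity ratios $\sigma(\alpha)/\alpha$ forces exactly that, since the subgroup of $M^*$ generated by these ratios is finite, hence cyclic, of order $r$. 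The refinement is not vacuous: for $K=\QQ$ and $\alpha=2^{1/3}$ one gets $r=3$ and poles such as $\zeta_3^t 2^{-1/3}$, which lie in the Galois closure $\QQ(2^{1/3},\zeta_3)$ but not in $K(\alpha)$ itself, confirming both that the Galois closure (rather than $K(\alpha)$) is the right field and that your root-of-unity bookkeeping is the essential step. In short: same approach as the paper, plus a correct proof of the pole-location assertion that the paper states but does not justify.
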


Note that this corollary can be proved directly without the Skolem-Mahler-Lech theorem, as 
the set $\{ n | \sigma(\alpha^n) = \alpha^n\}$ is obviously an arithmetic progression. 
This means that the corollary is actually effective. More generally, Theorem 3 is effective 
when the order of the sequence $\{a_n\}$ is at most $2$. This is because that the sequence 
$\{\sigma(a_n) -a_n\}$ then has order at most $4$ and the Skolem-Mahler-Lech theorem 
is effective for LRS with order up to $4$. 

\section{An application to arithmetic dynamics}

The degree periodicity property can hold for sequences of algebraic numbers 
coming from certain non-linear recurrence relations. We summarize the key 
property that makes it work in the following Lemma, 
whose proof  is the same as our proof in section $2$. 

\begin{Lemma} Let $\{ a_n\}$ ($n\geq 0$) be a sequence of elements in a finite 
Galois extension $L$ of a field $K$.  Assume that for 
each $\sigma$ in the Galois group of $L$ over $K$, the set 
$\{n\in\NN|\sigma(a_n)-a_n=0\}$ is virtually periodic. Then the degree 
sequence $\{\deg(a_n)\}_{n\geqslant 0}$ is virtually periodic, where $\deg(a_n)=[K(a_n):K]$.
Furthermore, the sequence of the minimal polynomial of $a_n$ over $K$ is a rational sequence. 
\end{Lemma}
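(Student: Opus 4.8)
The plan is to transcribe the argument used for Theorem~3, now feeding in the virtual periodicity of the vanishing sets as a hypothesis rather than extracting it from the Skolem--Mahler--Lech theorem. Write $G=\mathrm{Gal}(L/K)$. For each $\sigma\in G$ the hypothesis supplies integers $N_\sigma, r_\sigma$ such that, for every $n>N_\sigma$, $\sigma$ fixes $a_n$ if and only if it fixes $a_{n+r_\sigma}$. Since $G$ is finite I would set $N=\max_{\sigma\in G}N_\sigma$ and $r=\prod_{\sigma\in G}r_\sigma$; any common multiple of the $r_\sigma$ works, so this single pair $(N,r)$ serves all $\sigma$ simultaneously. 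Consequently, for every $n>N$ the stabilizer $H_n=\{\sigma\in G:\sigma(a_n)=a_n\}$ is unchanged under $n\mapsto n+r$, i.e.\ $H_n=H_{n+r}$. The Galois correspondence gives $\deg(a_n)=[K(a_n):K]=[G:H_n]$, whence $\deg(a_n)=\deg(a_{n+r})$ for all $n>N$. This proves that the degree sequence is virtually periodic, and it uses nothing beyond the stated hypothesis.

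For the minimal polynomials I would split the indices $n>N$ into residue classes modulo $r$. On the class $n=N+i+kr$ $(0\le i\le r-1,\ k\ge 0)$ the stabilizer is the fixed group $H_{N+i}$, so after choosing coset representatives $\sigma_{i1},\dots,\sigma_{id_i}$ for $G/H_{N+i}$ we have
\[
P_{N+i+kr}(T)=\prod_{j=1}^{d_i}\bigl(T-\sigma_{ij}(a_{N+i+kr})\bigr),
\]
and each coefficient of $P_{N+i+kr}$ is one fixed elementary symmetric polynomial evaluated on the Galois translates $\sigma_{ij}(a_{N+i+kr})$.

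The step that is not automatic here, and which I expect to be the main obstacle, is upgrading these coefficient expressions to rational sequences in $k$. In Theorem~3 this was immediate: the sequence $\{a_n\}$ was assumed rational, hence each $\{\sigma(a_n)\}$ was rational (applying $\sigma$ preserves rationality), passing to the subprogression $k\mapsto\sigma_{ij}(a_{N+i+kr})$ preserved rationality by Proposition~1(3), and a polynomial combination of rational sequences is rational by Lemma~1. The present hypothesis alone does not supply this. Indeed, with $G=\{1,\sigma\}$ of order two one may take $a_n$ to be a fixed generator of $L$ on the even indices and an arbitrary non-rational $K$-valued sequence on the odd indices: the vanishing set for $\sigma$ is then exactly the odd integers, so the hypothesis holds and the degree sequence is virtually periodic, yet the minimal-polynomial sequence is not rational. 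Thus the ``furthermore'' conclusion genuinely requires that $\{a_n\}$ itself (equivalently, each $\{\sigma(a_n)\}$) be a rational sequence, and I would either carry this as a standing assumption or restrict that second conclusion to such settings. Granting it, the chain ``apply $\sigma$ / restrict to the subprogression / invoke Lemma~1'' finishes the minimal-polynomial statement exactly as in section~2, while the degree statement holds in full generality as above.
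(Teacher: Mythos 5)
Your treatment of the degree sequence is exactly the paper's: the paper proves this Lemma only by the remark that its proof ``is the same as our proof in section 2,'' and your transcription of that argument (uniformize $N_\sigma$ and $r_\sigma$ over the finite group $G$, conclude $H_n=H_{n+r}$ for $n>N$, apply the Galois correspondence to get $\deg(a_n)=[G:H_n]$) is precisely what is intended and is correct; it uses only the stated hypothesis.

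Your critique of the ``furthermore'' clause is also well taken, and it exposes a genuine gap in the paper rather than in your proof. The Lemma's hypotheses constrain only the vanishing sets $\{n\in\NN\mid\sigma(a_n)=a_n\}$ and say nothing about the values $a_n$ themselves, so your example (a fixed generator of $L$ on even indices, an arbitrary non-rational $K$-valued sequence on odd indices) satisfies the hypotheses while the minimal-polynomial sequence fails to be rational: by Proposition 1(3) its restriction to the odd progression would have to be rational, and its constant coefficients there form a non-rational sequence. The paper's proof-by-reference silently imports the standing assumption of Theorem 3 that $\{a_n\}$ is a rational sequence, which is exactly what powers the chain ``apply $\sigma$, pass to the subprogression $k\mapsto\sigma_{ij}(a_{N+i+kr})$, take elementary symmetric polynomials'' via Proposition 1(3) and Lemma 1. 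So the second conclusion does need the extra hypothesis you identify (or some substitute guaranteeing each sequence $\{\sigma_{ij}(a_{N+i+kr})\}_{k\ge 0}$ is rational). Note that the same issue propagates to the ``furthermore'' clause of Theorem 5, where $f^{(n)}(a)$ is not in general a rational sequence: for $f(x)=x^2$, $a=2$, $K=L=\QQ$ one gets $P_n(T)=T-2^{2^n}$, whose generating function in $x$ is not rational. The degree-periodicity part of the Lemma, which is what the dynamical application actually needs, is proved completely by your argument and matches the paper.
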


In the case that the sequence is an LRS in a field $L$ of characteristic zero, the condition in the Lemma is satisfied 
by the Skolem-Mahler-Lech theorem and hence one gets the degree periodicity. 
The condition is also satisfied for some more complicated sequences. 
As an illustration, we give one such example coming from arithmetic dynamics. 

\begin{Theorem} Let $L/K$ be a finite extension of number fields. Let $f(x)\in L[x]$ be a polynomial 
and $a\in L$. Then the sequence $\{ \deg_K f^{(n)}(a)\}$ is virtually periodic, where $f^{(n)}$ denotes 
the $n$-th iterate of the polynomial $f$. Furthermore, the sequence of the minimal polynomial 
of $f^{(n)}(a)$ over $K$ is a rational sequence. 
\end{Theorem}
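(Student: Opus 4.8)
The plan is to invoke Lemma 2, which reduces the whole statement to verifying its hypothesis: namely, that for each $\sigma$ in the Galois group $G = \mathrm{Gal}(L'/K)$ of a finite Galois closure $L'$ of $L$ over $K$, the set $\{n \in \NN \mid \sigma(f^{(n)}(a)) = f^{(n)}(a)\}$ is virtually periodic. Once this is established, Lemma 2 delivers both the virtual periodicity of $\{\deg_K f^{(n)}(a)\}$ and the rationality of the minimal polynomial sequence at once, so no separate argument for the two conclusions is needed.

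To verify the hypothesis, first I would recast the iteration dynamically. The point $a$ and its orbit $\{f^{(n)}(a)\}$ live in $L \subseteq L'$; I want to phrase ``$\sigma$ fixes $f^{(n)}(a)$'' as an orbit-meets-subvariety statement so that the dynamic Mordell--Lang conjecture (Xie's theorem in the plane) applies. The natural move is to pass to the diagonal: consider the self-map $F = f \times f^{\sigma}$ on $\AA^2$, where $f^{\sigma}$ is the polynomial obtained by applying $\sigma$ to the coefficients of $f$. Starting from the point $(a, \sigma(a)) \in \AA^2(L')$, the $n$-th iterate is $F^{(n)}(a,\sigma(a)) = (f^{(n)}(a), \sigma(f^{(n)}(a)))$, using that $\sigma$ commutes with substitution, i.e. $\sigma(f^{(n)}(a)) = (f^\sigma)^{(n)}(\sigma(a))$. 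Then $\sigma$ fixes $f^{(n)}(a)$ precisely when $F^{(n)}(a,\sigma(a))$ lands on the diagonal $\Delta = \{x = y\}$, a curve in $\AA^2$. Thus $\{n \mid \sigma(f^{(n)}(a)) = f^{(n)}(a)\}$ is exactly the return set of the $F$-orbit of $(a,\sigma(a))$ to the subvariety $\Delta$.

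At this stage I would apply Xie's theorem \cite{Xi17}, which proves the dynamic Mordell--Lang conjecture for polynomial endomorphisms of $\AA^2$ over a field of characteristic zero: for any such $F$, any starting point, and any subvariety $V$, the return set $\{n \mid F^{(n)}(\text{point}) \in V\}$ is a finite union of arithmetic progressions, hence in particular virtually periodic. Applying this with $V = \Delta$ gives exactly the hypothesis of Lemma 2 for each $\sigma \in G$, and we are done. The main obstacle, and the place where care is genuinely required, is confirming that this setup fits the precise statement of Xie's result: one must check that $F$ is an honest endomorphism of the affine plane (a product of two one-variable polynomials is, so this is clean), that the base field is of the right type (a number field, which $L'$ is, being a finite extension of the number field $K$), and that $\Delta$ is a legitimate closed subvariety (it is). There is no genuine analytic or combinatorial difficulty beyond this translation; the entire weight of the argument is carried by Xie's theorem, and the contribution here is the observation that Galois conjugation of an iterated one-variable map is itself an iteration of a two-variable map, so that the fixed-index set becomes a dynamical return set to the diagonal.
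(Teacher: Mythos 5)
Your proposal is correct and follows essentially the same route as the paper: reduce via Lemma 2 to the virtual periodicity of the fixed-index set for each $\sigma$, realize that set as the return set of the orbit of $(a,\sigma(a))$ under the product map $(x_1,x_2)\mapsto (f(x_1), \sigma(f)(x_2))$ to the diagonal in $\AA^2$, and invoke Xie's theorem on the dynamical Mordell--Lang conjecture for polynomial endomorphisms of the affine plane.
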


\begin{proof} As before, we may assume that $L$ is a finite Galois extension of $K$ with Galois group $G$. 
For $\sigma\in G$, $\sigma(f)\in L[x]$ is another polynomial over $L$. By the Lemma, 
it is enough to prove that the set 
$$\{ n\in \NN| \sigma(f^{(n)}(a))=f^{(n)}(a)\}=\{ n\in \NN| \sigma(f)^{(n)}(\sigma(a))=f^{(n)}(a)\}$$
is 
virtually periodic for every $\sigma\in G$. Define the polynomial map 
$$F_{\sigma}: \AA_L^2 \longrightarrow \AA_L^2, \ F_{\sigma}(x_1, x_2) = ( f(x_1), \sigma(f)(x_2)).$$
Let $V$ be the diagonal $\{(x, x)\}$ in $\AA^2$. On checks that 
$$\{n\in \NN| \sigma(f^{(n)}(a))=f^{(n)}(a)\} = \{n\in \NN | F_{\sigma}^{(n)}(a, \sigma(a)) \in V\}.$$
The last set is virtually periodic by the affine plane $\AA^2$ case of the dynamic Mordell-Lang conjecture 
as proved in Xie \cite{Xi17}. The proof is complete. 

A natural generalization of the above theorem is the following conjecture. 

\begin{Conjecture} Let $L/K$ be a finite extension of number fields and $m\in \NN$. Let 
$$f=(f_1(x_1,\cdots, x_m), \cdots, f_m(x_1,\cdots, x_m)) \in L[x_1,\cdots, x_m]^m$$
be a polynomial map from $L^m$ to $L^m$. For any element $a=(a_1,\cdots, a_m)\in L^m$, the sequence $\{ \deg_K f^{(n)}(a)\}$ is virtually periodic, where 
$\deg_K f^{(n)}(a)$ denotes the degree of the field extension over $K$ obtained by adjoining the coordinates 
of the point $f^{(n)}(a) \in L^m$. 
\end{Conjecture}

The above theorem shows that this conjecture is true in the case $m=1$. For $m>1$, it 
is a consequence of the affine $\AA^{2m}$ case of the dynamic Mordell-Lang conjecture, which is still 
open.


\end{proof}

\begin{center}
{\bf Acknowledgements}
\end{center}

This note grew out of the Number Theory Online Mini-Course held 
at Xiamen University in August 2020. It is a pleasure to thank 
the many audience, particularly, Shaoshi Chen and his students, for their interests, questions and careful note-taking.


\begin{thebibliography}{10}


\bibitem[BE81]{BE81} B. Berndt and R. Evans, 
\newblock The determination of Gauss sums, 
\newblock{\em Bull. Amer. Math. Soc.}, 5(1981), 107-129. 

\bibitem[BK10]{BK10}E. Bombieri and N.M. Katz,  
\newblock A note on lower bounds for Frobenius traces,  
\newblock {\em Enseign. Math.}, (2) 56(2010), no. 3-4, 203-227. 

\bibitem[Fi92]{Fi92} B. Fisher, 
\newblock Distinctness of Kloosterman sums, 
\newblock{\em Contemporary Mathematics}, Vol 133 (1992), 81-102. 

\bibitem[GT09]{GT09} D. Ghioca and T. J. Tucker, 
\newblock Periodic points, linearizing maps, and the dynamical Mordell–Lang problem, 
\newblock {\em J. Number Theory}, Vol 129, 6 (2009), 1392-1403. 


\bibitem[My81]{My81} G. Myerson,  
\newblock Period polynomials and Gauss sums for finite fields, 
\newblock {\em Acta Arith.}, 39(1981), no. 3, 251-264.

\bibitem[Sa32]{Sa32} H. Sali\'e, 
\newblock \"Uber die Kloostermanschen summen $S(u,v;q)$, 
\newblock{\em Math. Z.}, vol. 34 (1932), 91-109.

\bibitem[W95]{W95} D. Wan, 
\newblock Minimal polynomials and  distinctness of Kloosterman sums, 
\newblock {\em Finite Fields and Applications}, Vol.1, 2(1995), 189-203. 

\bibitem[W20]{W20} D. Wan, 
\newblock Lecture notes on exponential sums over finite fields, 
\newblock {\em Xiamen Online Number Theory Mini-Course}, August 2020. 

 \bibitem[Xi17]{Xi17} J. Xie, 
 \newblock The dynamical Mordell–Lang conjecture for polynomial endomorphisms of the affine plane, 
 \newblock{\em Ast\'erisque}, Vol. 394, 2017. 





\end{thebibliography}
\end{document}